\newtheorem{theorem}{Theorem}
\newtheorem{definition}{Definition}
\newtheorem{lemma}{Lemma}
\newtheorem{conjecture}{Conjecture}
\title{Maximizing Volume Ratios for Shadow Covering by Tetrahedra}
\author{Christina Chen}
\date{November 28, 2011}
\begin{document}
\maketitle

\begin{abstract}
Define a body $A$ to be able to hide behind a body $B$ if the orthogonal projection of $B$ contains a translation of the corresponding orthogonal projection of $A$ in every direction.  In two dimensions, it is easy to observe that there exist two objects such that one can hide behind another and have a larger area than the other.  It was recently shown that similar examples exist in higher dimensions as well.  However, the highest possible volume ratio for such bodies is still undetermined.  We investigated two three-dimensional examples, one involving a tetrahedron and a ball and the other involving a tetrahedron and an inverted tetrahedron.  We calculate the highest volume ratio known up to this date, 1.16, which is generated by our second example.  

\end{abstract}

\section{Introduction}

The shadows of two-dimensional objects are line segments, so it is easy to construct examples of two objects such that one hides behind the other but has a larger area than the other.  However, in three dimensions, since shadows have shapes, the question is more complicated.  In \cite{Klain1}, it was shown that in three dimensions, there exist two convex bodies such that one can hide behind the other but have a larger volume than the other.  Examples that invoke Minkowski sums were presented in \cite{Klain3}, but optimal ratios were not calculated.  Recently, in \cite{Klain4}, it was shown that in $n$ dimensions, the optimal volume ratio for two such objects is bounded above by $(\frac{n}{n-1})^{n}$, which decreases monotonically and approaches $e\approx2.718$ as $n$ approaches infinity.  In particular, a universal bound of 2.942 was obtained, which is the value of the expression for $n = 7$ and which can be easily verified for $n \leq 6$.    

Here, we present two examples and calculate the optimal numerical ratios for each.  In particular, our examples require the Minkowski sums of a tetrahedron and a ball and a tetrahedron and an inverted tetrahedron.  The optimal volume ratio calculated for the first example is 1.12, and the optimal volume ratio calculated for the second example is 1.16.    

In Section~\ref{sec:def}, we define all the relevant terms and present two two-dimensional examples of pairs of objects such that one can hide behind another but have a bigger area than the other.  In Section~\ref{sec:tetraball} and Section~\ref{sec:tetratetra}, we present our new examples.  In Section~\ref{sec:futureresearch}, we conclude with ideas for continued research.

\section{Definitions}\label{sec:def}

\subsection{Hiding Behind}
\begin{definition}
Define a \emph{shadow} of an $n$-dimensional body $K$ as an orthogonal projection of $K$ onto $\mathds{R}^{n-1}$.  A body $A$ is defined to be able to \emph{hide behind} a body $B$ if the orthogonal projection of $B$ contains a translation of the corresponding orthogonal projection of $A$ in every direction.  
\end{definition}
For example, in Figure~\ref{fig:hidebehind}, a disk of radius $\frac{\sqrt{3}}{4}$ can hide behind a unit equilateral triangle.  The shadows of the disk are line segments of length $\frac{\sqrt{3}}{2}$, and the shadows of the triangle are line segments ranging in length from $\frac{\sqrt{3}}{2}$ to 1.

\begin{center}
\begin{figure}[http]
\centering
\includegraphics[scale=0.5]{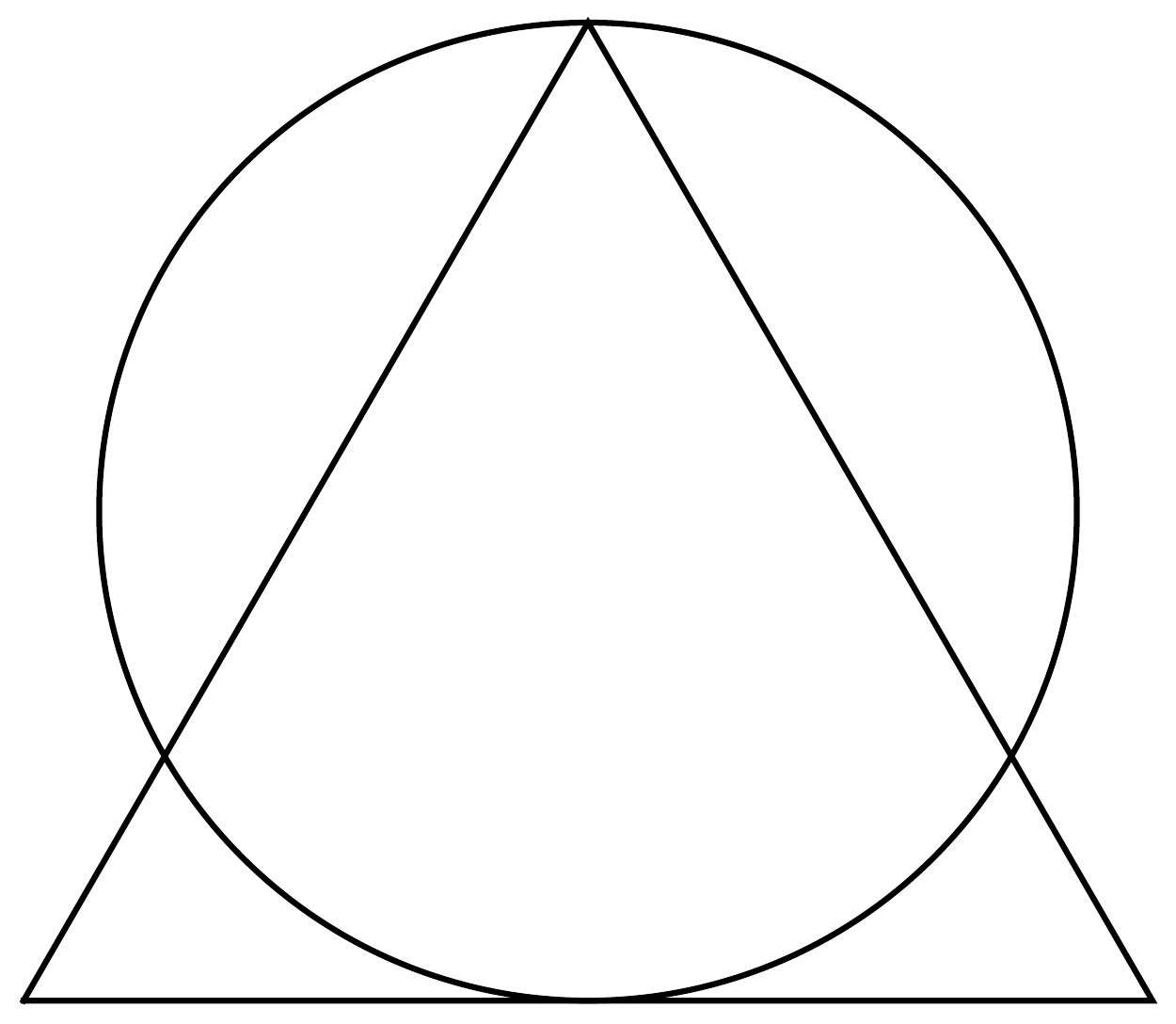}
\caption{The disk hides behind the triangle.}
\label{fig:hidebehind}
\end{figure}
\end{center}


\subsection{Minkowski Sum}

\begin{definition}
The \emph{Minkowski sum} of two sets $A$ and $B$ in a vector space is the set obtained by adding every element in $A$ to every element in $B$, expressed as
$$A+B=\{a+b \mid a\in A, b\in B\}.$$  
\end{definition}
For example, Figure~\ref{fig:minkexample} illustrates the Minkowsi sum of a square and an equilateral triangle, both scaled by $\frac{1}{2}$.  

\begin{center}
\begin{figure}[http]
\includegraphics[scale=0.95]{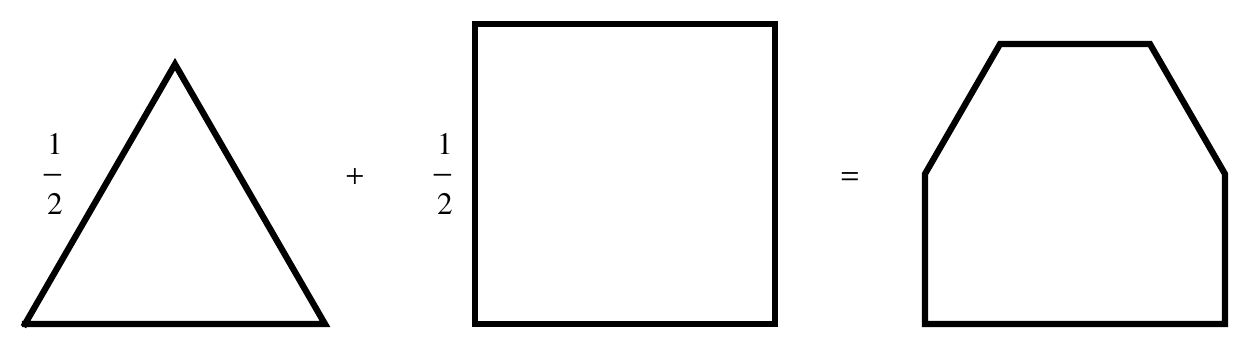}
\caption{The Minkowski sum of a scaled equilateral triangle and a scaled square.}
\label{fig:minkexample}
\end{figure}
\end{center}

We can also define the \emph{Minkowski interpolation} of two objects $A$ and $B$ as the Minkowski sum of $\alpha A$ and $\beta B$, where $\alpha$ and $\beta$ are scaling constants.

\begin{lemma}\label{lem:hidingsum}
For convex bodies $A$, $B$, and $C$, if $A$ and $B$ can hide behind $C$, then for any $\upmu$ such that $0 \leq \upmu \leq 1$, $\upmu A+(1-\upmu)B$ can hide behind $C$. 
\end{lemma}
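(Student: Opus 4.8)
The plan is to reduce the claim to a single direction and exploit the linearity of orthogonal projection together with the convexity of the shadow of $C$. Fix an arbitrary direction and let $\pi$ denote the orthogonal projection onto the corresponding copy of $\mathds{R}^{n-1}$. Since $\pi$ is linear, it commutes with dilation and with Minkowski addition, so
\[
\pi\bigl(\upmu A + (1-\upmu)B\bigr) = \upmu\,\pi(A) + (1-\upmu)\,\pi(B).
\]
By hypothesis $A$ and $B$ can hide behind $C$, so there are translation vectors $t$ and $s$ (lying in the projection hyperplane) with $\pi(A) + t \subseteq \pi(C)$ and $\pi(B) + s \subseteq \pi(C)$.

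Next I would take $w = \upmu t + (1-\upmu)s$ as the translation vector for $\upmu A + (1-\upmu)B$. Using that Minkowski addition is monotone with respect to inclusion,
\[
\pi\bigl(\upmu A + (1-\upmu)B\bigr) + w = \upmu\bigl(\pi(A)+t\bigr) + (1-\upmu)\bigl(\pi(B)+s\bigr) \subseteq \upmu\,\pi(C) + (1-\upmu)\,\pi(C).
\]

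The key step --- and the only place convexity enters --- is the identity $\upmu\,\pi(C) + (1-\upmu)\,\pi(C) = \pi(C)$. Because $C$ is convex, its shadow $\pi(C)$ is convex, and for any convex set $K$ and nonnegative scalars $\alpha,\beta$ one has $\alpha K + \beta K = (\alpha+\beta)K$: the inclusion $\supseteq$ is immediate, and $\subseteq$ is precisely convexity of $K$ after rescaling by $1/(\alpha+\beta)$. Taking $\alpha=\upmu$ and $\beta=1-\upmu$ gives $\upmu\,\pi(C)+(1-\upmu)\,\pi(C)=\pi(C)$, hence $\pi(\upmu A+(1-\upmu)B)+w\subseteq\pi(C)$. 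Since the direction was arbitrary, $\upmu A+(1-\upmu)B$ can hide behind $C$, which is what we wanted.

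I do not anticipate a genuine obstacle; the work is entirely bookkeeping --- confirming that the required translations can be chosen within the projection hyperplane and that the linearity and monotonicity properties of the Minkowski sum are invoked correctly. It is also worth recording in passing that $\upmu A+(1-\upmu)B$ is itself a convex body, being a Minkowski combination of convex bodies, so the conclusion concerns a legitimate object of the kind considered in the lemma.
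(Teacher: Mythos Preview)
Your proof is correct and follows essentially the same approach as the paper's: fix a direction, use linearity of projection to write the shadow of the Minkowski interpolation as the interpolation of the shadows, choose the translation $w=\upmu t+(1-\upmu)s$, and then use convexity of $\pi(C)$ to collapse $\upmu\,\pi(C)+(1-\upmu)\,\pi(C)$ to $\pi(C)$. The only difference is that you spell out the convexity step $\alpha K+\beta K=(\alpha+\beta)K$ in more detail than the paper does.
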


\begin{proof}  

For an $n$-dimensional body $A$ and a unit vector $u$ in $\mathds{R}^{n}$, define $A_{u}$ to be the projection of $A$ onto the subspace $u^{\perp}$.  Since $A$ and $B$ can hide behind $C$, there exist vectors $x, y \in u^\perp$ such that $A_u + x \subseteq C_u$ and $B_u + y \subseteq C_u$.
It follows that 

\begin{align*}
\upmu A_u + (1-\upmu)B_u + [\upmu x + (1-\upmu) y] 
&= \upmu (A_u + x) + (1-\upmu)(B_u + y)\\
&\subseteq \upmu C_u + (1-\upmu)C_u = C_u.
\end{align*}
Since the projection of the sum is the sum of the projections, 
$$[\upmu A + (1-\upmu)B]_u = \upmu A_u + (1-\upmu)B_u,$$
so $\upmu A + (1-\upmu)B$ can also hide behind $C$. 

\end{proof}

\begin{lemma}\label{lem:scaling}
For a scaling constant $s$, $0 < s < 1$, such that $sA$ can hide behind $B$, and any constant $t$, $0 < t < 1$, the volume of the Minkowski sum $tsA+(1-t)B$ is maximized when $s$ is maximized. 
 
\end{lemma}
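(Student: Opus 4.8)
The plan is to reduce the statement to a single monotonicity fact: that $\mathrm{Vol}\big(tsA+(1-t)B\big)$, viewed as a function of $s$, is nondecreasing. Once this is known the lemma is immediate, since the maximum of a nondecreasing function over a set is attained at the largest element of that set; in particular the hypothesis that $sA$ hides behind $B$ is used only to delimit the range of $s$ under consideration and plays no further role in the argument.

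To prove the monotonicity I would first note that translating $A$ affects neither whether $sA$ can hide behind $B$ (translating $A$ just translates each projection $A_u$, which does not change whether some translate of $A_u$ lies in $B_u$) nor the volume of $tsA+(1-t)B$ (translating $A$ translates this body rigidly). So, without loss of generality, $0\in A$. Then for $0<s_1\le s_2$ and any $a\in A$ we can write $s_1 a=\tfrac{s_1}{s_2}(s_2 a)+\big(1-\tfrac{s_1}{s_2}\big)\,0$, a convex combination of two points of $s_2 A$; by convexity of $A$ this gives $s_1 A\subseteq s_2 A$. Adding $(1-t)B$ to both sides and using that Minkowski addition preserves containments yields $ts_1 A+(1-t)B\subseteq ts_2 A+(1-t)B$, and since volume is monotone under set inclusion, $\mathrm{Vol}(ts_1 A+(1-t)B)\le\mathrm{Vol}(ts_2 A+(1-t)B)$, which is exactly the monotonicity claimed.

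An alternative route is to expand the volume into mixed volumes, $\mathrm{Vol}(tsA+(1-t)B)=\sum_{k=0}^{n}\binom{n}{k}(ts)^k(1-t)^{n-k}\,V(A[k],B[n-k])$; since the mixed volumes of convex bodies are nonnegative and $s,t>0$, each summand is nondecreasing in $s$. Neither argument is difficult; the only genuinely subtle point is the reduction ``without loss of generality $0\in A$,'' which relies on the translation invariance of both hiding and Minkowski-sum volume, and --- for the phrase ``$s$ maximized'' to be meaningful --- on the set of admissible $s$ being closed, a fact that follows from a routine compactness argument on the translates realizing the containments. I expect this bookkeeping, rather than any genuine inequality, to be the main thing to get right.
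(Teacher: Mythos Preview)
Your proposal is correct. The alternative route you mention---expanding $\mathrm{Vol}(tsA+(1-t)B)$ as $\sum_k \binom{n}{k}(ts)^k(1-t)^{n-k}\,V(A[k],B[n-k])$ and observing that mixed volumes are nonnegative---is precisely the paper's argument (the paper works in dimension three and so writes out the cubic in $s$ explicitly, concluding that it is a polynomial in $s$ with positive coefficients). Your primary argument, normalizing $0\in A$ and using the containment $s_1A\subseteq s_2A$ to deduce $ts_1A+(1-t)B\subseteq ts_2A+(1-t)B$, is genuinely more elementary: it needs only convexity and the monotonicity of Lebesgue measure, not mixed volumes at all. What the paper's route buys is that the polynomial structure in $s$ is made explicit, which is useful later when these volume expressions are actually computed and optimized; your containment argument yields monotonicity but does not by itself exhibit that structure. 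Your remarks on translation invariance and on the closedness of the admissible set of $s$ address technical points the paper passes over in silence.
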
 

\begin{proof}
Invoking mixed volumes, the volume of the Minkowski sum can be expressed as $$t^{3}s^{3}V(A)+2t^{2}s^{2}(1-t)V(A, A, B)+3ts(1-t)^{2}V(A, B, B)+(1-t)^{3}V(B).$$
Note that this is a polynomial expression with respect to $s$ with positive coefficients.  It follows that this expression is maximized when $s$ is maximized.
\end{proof}

\subsection{Two-Dimensional Examples}

In two dimensions, the Minkowski interpolation of a triangle and a disk can hide behind the original triangle but not hide inside it.  See Figure~\ref{fig:triangledisk}.   
 
\begin{center}
\begin{figure}[http]
\includegraphics[scale=0.95]{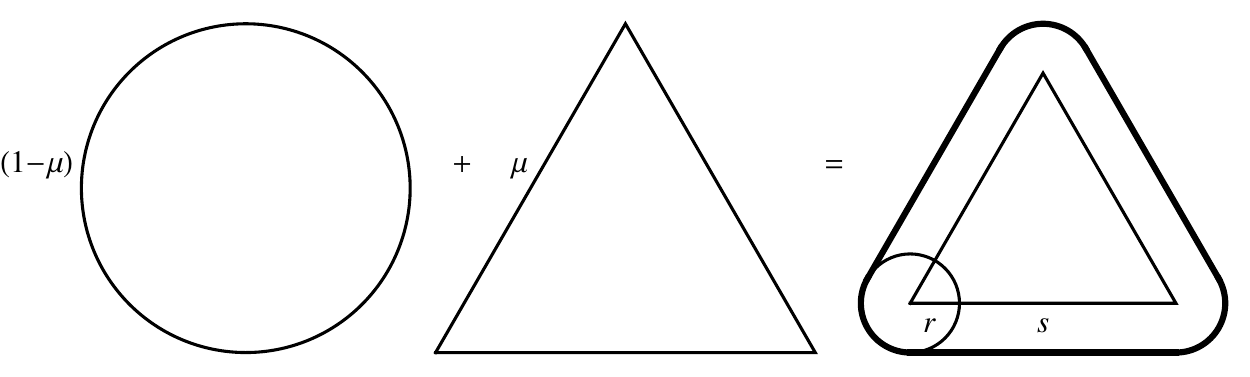}
\caption{Minkowski sum of triangle and disk.}
\label{fig:triangledisk}
\end{figure}
\end{center}

Although the original disk, with radius $\frac{\sqrt{3}}{4}$, which has a larger area than the unit triangle, can hide behind the triangle, invoking Minkowski sums generates a shape with a greater area ratio.  More precisely, the area ratio is maximized by the Minkowski sum of the disk scaled by $\frac{6-\sqrt{3}\pi}{8-\sqrt{3}\pi}\approx 0.22$, generating the approximate area ratio $1.39$. 

As another example, the Minkowski interpolation of a triangle and an inverted triangle can hide behind the original disk but not hide inside it.  See Figure~\ref{fig:triangletriangle}.  The area ratio is maximized by the Minkowski sum of both shapes scaled by $\frac{1}{2}$, generating the area ratio $1.5$, which is the optimal area ratio in two dimensions, by Steiner's Theorem. 

\begin{center}
\begin{figure}[http]
\includegraphics[scale=0.95]{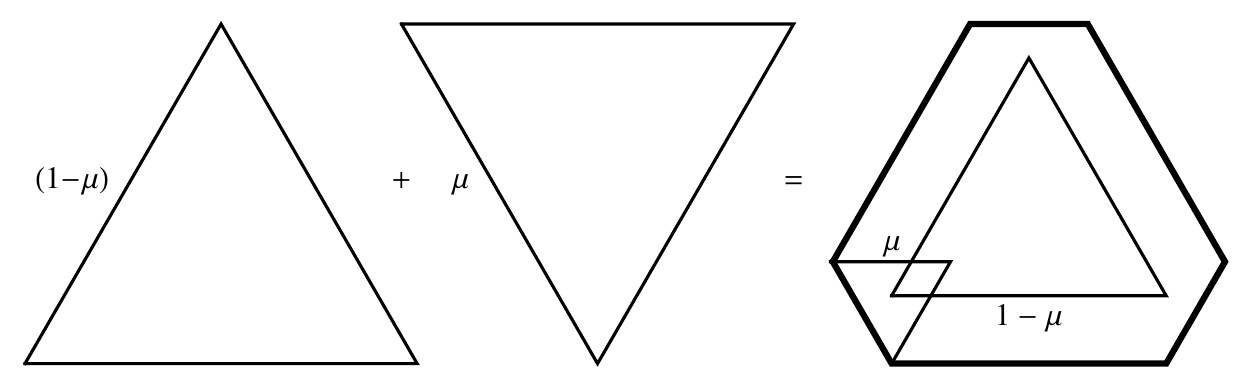}
\caption{Minkowski sum of triangle and inverted triangle.}
\label{fig:triangletriangle}
\end{figure}
\end{center}

Note that the case involving two triangles generates a higher area ratio than the case involving a triangle and a disk.  This result extends to three dimensions, as will be shown in the next two sections, which describe two three-dimensional examples involving a tetrahedron. 

\section{Tetrahedron and Ball}\label{sec:tetraball}

One example in which one body can hide behind another body of smaller volume is generated by the Minkowski interpolation of a tetrahedron and a ball.  Suppose a unit tetrahedron is positioned in the coordinate plane with vertices at $(0, 0, 0)$, $(1, 0, 0)$, $(\frac{1}{2}, \frac{\sqrt{3}}{2},0)$, $(\frac{1}{2}, \frac{\sqrt{3}}{6}, \sqrt{\frac{2}{3}})$.  By Lemma~\ref{lem:scaling}, to calculate the highest volume ratio in this example, it is necessary to first calculate the radius of the largest circle that can always be contained in the shadow of the tetrahedron, in other words, the smallest inradius of all possible orthogonal projections of the tetrahedron.

\begin{lemma}\label{lem:inradius}
The smallest inradius of all possible shadows of the tetrahedron is $\frac{\sqrt{6}-\sqrt{2}}{4}$. 
\end{lemma}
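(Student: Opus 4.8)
The plan is to parametrize the shadows of the regular tetrahedron by the projection direction $u$ on the sphere $S^2$ (modulo antipodes), and to minimize the inradius of the planar convex polygon $T_u$ over all such $u$. The projection of a tetrahedron onto a plane is always a polygon with at most four vertices: either a triangle (when $u$ is parallel to a face normal, so one vertex projects into the triangle formed by the other three) or a (possibly degenerate) quadrilateral. The inradius of $T_u$ is a continuous function on the compact space $S^2/\{\pm1\}$, so a minimum is attained; the task is to locate it. I would first reduce the search using the symmetry group of the tetrahedron (order $24$), which acts on the set of directions, so it suffices to examine a fundamental domain. A natural guess, to be confirmed, is that the minimizing direction is one of the distinguished ones: a face normal, an edge midpoint direction (a $2$-fold axis, giving a rectangular shadow), or a vertex direction.

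The key computational steps are: (i) for the ``edge'' direction $u$ pointing along the common perpendicular of two opposite edges — equivalently a coordinate-type axis of the tetrahedron — the shadow is a rectangle (in fact a square of side related to the edge length), and its inradius is half the shorter side; compute this. (ii) For a face-normal direction the shadow is an equilateral triangle of side $1$ with inradius $\frac{1}{2\sqrt3}=\frac{\sqrt3}{6}\approx 0.289$, which is comparatively large, so it will not be the minimizer. (iii) For a general direction, write $T_u$ as the convex hull of the four projected vertices and express its inradius; then show via a first-order (stationarity) argument on the fundamental domain that no interior critical point beats the edge-direction value, and check the boundary cases of the fundamental domain, which are exactly the special directions in (i)–(ii). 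Arithmetic should yield that the rectangular/square shadow along the opposite-edge axis has inradius $\frac{\sqrt6-\sqrt2}{4}\approx 0.259$, and that this is the global minimum.

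The main obstacle I anticipate is step (iii): handling the two-parameter family of ``quadrilateral'' shadows. Unlike a triangle, the inradius of a quadrilateral has no one-line closed form, and the combinatorial type of $T_u$ (which vertex is ``hidden,'' which edges bound the incircle) changes across the sphere, so the minimization splits into cases according to which three of the four projected edges the incircle touches. One clean way to manage this: for each candidate contact pattern, the incircle center and radius solve a small linear system (equal signed distances to three lines), giving $r$ as an explicit algebraic function of $u$; then reduce to showing each such function is minimized on the closure of its region either at an edge-axis direction or on a shared boundary with another region, so the global minimum is forced to the square shadow. I would also double-check the value by an independent argument: since $\frac{\sqrt6-\sqrt2}{4}=\sin 15^\circ$, there is likely a slick description of the worst shadow (e.g. as a specific rectangle inscribed between two skew edges of the tetrahedron) that pins the constant directly and confirms the numerical claim.
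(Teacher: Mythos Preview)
Your analytical plan differs substantially from the paper's argument, which is purely computational: the paper parametrizes the rotation by two Euler angles, writes down the four projected vertices explicitly, and then simply reports that a program searching over this two-parameter family confirms the minimum inradius $\frac{\sqrt{6}-\sqrt{2}}{4}$, attained when the projection direction is parallel to a face. No analytic minimization or symmetry reduction is carried out there.

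That said, your proposal contains a concrete error that would derail it. The shadow along a $2$-fold axis (the common perpendicular of a pair of opposite edges) is indeed a square, but for a unit-edge tetrahedron that square has side $\frac{1}{\sqrt{2}}$ and hence inradius $\frac{\sqrt{2}}{4}\approx 0.354$, not $\frac{\sqrt{6}-\sqrt{2}}{4}\approx 0.259$; among your three candidate axes it actually gives the \emph{largest} inradius. The true minimizer is the direction along an \emph{edge} of the tetrahedron: projecting the tetrahedron with vertices $(0,0,0)$, $(1,0,0)$, $(\frac{1}{2},\frac{\sqrt{3}}{2},0)$, $(\frac{1}{2},\frac{\sqrt{3}}{6},\sqrt{\frac{2}{3}})$ along the $x$-axis collapses the first two vertices and produces an isosceles triangle with sides $\frac{\sqrt{3}}{2},\frac{\sqrt{3}}{2},1$, area $\frac{\sqrt{2}}{4}$, and inradius $\dfrac{\sqrt{2}/4}{(1+\sqrt{3})/2}=\dfrac{\sqrt{6}-\sqrt{2}}{4}$. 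This edge direction lies on a reflection plane of the tetrahedral group and hence on the boundary of the fundamental domain, but it is an interior point of a boundary arc, not one of the three corner axes. So your assertion that ``the boundary cases \ldots\ are exactly the special directions in (i)--(ii)'' is precisely where the argument breaks; to make the analytic approach work you must minimize over the full one-parameter boundary arcs, not just their endpoints.
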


\begin{proof}
All possible orthogonal projections onto the $xy$-plane can be obtained by rotating the tetrahedron first clockwise about the $x$-axis by $\alpha$ degrees and then counterclockwise about the $y$-axis by $\beta$ degrees, which is sufficient because only the shape and not the configuration of the projection is relevant, as all the orthogonal projections of a ball are disks. The new coordinates of the vertices of the tetrahedron after these transformations can be explicitly calculated by matrix multiplication involving the rotation matrices
$$\begin{pmatrix}
1&0&0\\
0&\cos\alpha&-\sin\alpha\\
0&\sin\alpha&\cos\alpha\\
\end{pmatrix} \text{ and }
\begin{pmatrix}
\cos\beta&0&\sin\beta\\
0&1&0\\
-\sin\beta&0&\cos\beta\\
\end{pmatrix}.$$
For example, for the vertex $(1, 0, 0)$, we have
$$
\begin{pmatrix}
\cos\beta&0&\sin\beta\\
0&1&0\\
-\sin\beta&0&\cos\beta\\
\end{pmatrix}
\begin{pmatrix}
1&0&0\\
0&\cos\alpha&-\sin\alpha\\
0&\sin\alpha&\cos\alpha\\
\end{pmatrix}
\begin{pmatrix}
1\\
0\\
0\\
\end{pmatrix}
=
\begin{pmatrix}
\cos\beta\\
0\\
-\sin\beta\\
\end{pmatrix}.\\
$$
Therefore, after the rotations, the coordinates of the projections of the vertices of the unit tetrahedron onto the $xy$-plane are 
$$(0, 0, 0)\longrightarrow(0, 0)$$\\
$$(1, 0, 0)\longrightarrow(\cos\beta, 0)$$\\
$$(\frac{1}{2}, \frac{\sqrt{3}}{2}, 0)\longrightarrow(\frac{1}{2}\cos\beta+\frac{\sqrt{3}}{2}\sin\alpha\sin\beta, \frac{\sqrt{3}}{2}\cos\alpha)$$\\
$$(\frac{1}{2}, \frac{\sqrt{3}}{6}, \sqrt{\frac{2}{3}})\longrightarrow(\frac{1}{2}\cos\beta+\frac{\sqrt{3}}{6}\sin\alpha\sin\beta+\sqrt{\frac{2}{3}}\cos\alpha\sin\beta, \frac{\sqrt{3}}{6}\cos\alpha-\sqrt{\frac{2}{3}}\sin\alpha).$$\\
The shadow obtained by any projection is defined by the convex hull of these coordinates.

Employing this procedure, we wrote a program that verified that the radius of the largest disk that can be contained in all orthogonal projections of the tetrahedron is $\frac{\sqrt{6}-\sqrt{2}}{4}$, which is obtained by projecting parallel to one of the faces of the tetrahedron.  

\end{proof}

\begin{theorem}

The maximum volume ratio for the Minkowski interpolation of a tetrahedron and a ball is obtained by scaling the ball by
$$\frac{3\sqrt{2}-\sqrt{6}+c_{1}\pi+(12-6\sqrt{3})\alpha}{8\sqrt{2}-3\sqrt{6}+c_{2}\pi+(18-9\sqrt{3})\alpha}+\sqrt{\frac{c_{3}+c_{4}\pi+c_{5}\pi^2+c_{6}\alpha+c_{7}\pi\alpha+c_{8}\alpha^2}{8\sqrt{2}-3\sqrt{6}+c_{9}\pi+18\alpha-9\sqrt{3}\alpha^2}}$$

$$\approx 0.68424688,$$  
where

\begin{align*}c_{1}&=6\sqrt{3}+3\sqrt{6}-5\sqrt{2}-12\\
c_{2}&=9\sqrt{3}+3\sqrt{6}-5\sqrt{2}-18\\
c_{3}&=24-12\sqrt{3}+18\sqrt{6}\\
c_{4}&=38-33\sqrt{2}-22\sqrt{3}\\
c_{5}&=63-36\sqrt{3}\\
c_{6}&=33\sqrt{2}-18\sqrt{6}\\
c_{7}&=72\sqrt{3}-126\\
c_{8}&=63-36\sqrt{3}\\
c_{9}&=9\sqrt{3}+3\sqrt{6}-18-5\sqrt{2},\\
\end{align*}
and the corresponding volume ratio is approximately 1.124358246.

\end{theorem}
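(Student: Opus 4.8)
The plan is to turn the statement into a one-variable calculus problem and then solve the resulting equation in closed form. By Lemma~\ref{lem:inradius}, the ball $B_r$ of radius $r=\tfrac{\sqrt6-\sqrt2}{4}$ lies inside every shadow of the unit tetrahedron $T$, so $B_r$ can hide behind $T$; and by Lemma~\ref{lem:scaling} there is no advantage in using a smaller ball, so among all Minkowski interpolations of a tetrahedron and a ball that hide behind $T$ it suffices to consider the one-parameter family $K_s=sT+(1-s)B_r$ with $s\in[0,1]$. Each $K_s$ hides behind $T$ by Lemma~\ref{lem:hidingsum} (applied with $A=B_r$, $B=C=T$), and conversely nothing is lost by this restriction: the extremal shadow of $T$ is the triangle obtained by projecting parallel to a face, and a body $\beta(\text{triangle})+(\text{disk of radius }\delta)$ fits inside that triangle exactly when $\beta+\delta/r\le 1$, so the interpolation realizing the largest volume ratio does occur among the $K_s$. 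Hence the quantity to be maximized is
\[
R(s)=\frac{V\!\left(sT+(1-s)B_r\right)}{V(T)},\qquad s\in[0,1].
\]

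First I would expand the numerator by the mixed-volume (Steiner) formula, exactly as in the proof of Lemma~\ref{lem:scaling}. Writing $B_r=rB_1$ with $B_1$ the unit ball,
\[
V\!\left(sT+(1-s)rB_1\right)=s^3V(T)+s^2(1-s)r\,S(T)+s(1-s)^2r^2M(T)+(1-s)^3r^3\tfrac{4}{3}\pi ,
\]
where $S(T)=3V(B_1,T,T)$ is the surface area of $T$ and $M(T)=3V(B_1,B_1,T)$ its mean-curvature integral. For the regular unit tetrahedron all of these are elementary: $V(T)=\tfrac{\sqrt2}{12}$, $S(T)=\sqrt3$, and, because $T$ has six unit edges each of interior dihedral angle $\alpha=\arccos\tfrac13$, the polytope formula $M=\tfrac12\sum_e\ell_e(\pi-\theta_e)$ gives $M(T)=3(\pi-\alpha)$. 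Substituting these, together with $r^2=\tfrac{2-\sqrt3}{4}$ and $r^3=\tfrac{3\sqrt6-5\sqrt2}{16}$, makes $R(s)$ an explicit cubic polynomial in $s$ whose coefficients are integer combinations of $1,\sqrt2,\sqrt3,\sqrt6,\pi$ and $\alpha$; this is where the constants $c_1,\dots,c_9$ come from.

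Next I would maximize this cubic on $[0,1]$. Its derivative $R'(s)$ is a quadratic with $R'(0)>0$ and $R'(1)<0$, so $R'$ has exactly one root $s^\ast$ in $(0,1)$ (the second root of the quadratic being negative), and $R$ has an interior maximum there; since $R(0)=\tfrac{4\pi r^3/3}{V(T)}<1$ and $R(1)=1<R(s^\ast)$, that point is the global maximum on $[0,1]$. Solving $R'(s)=0$ by the quadratic formula and keeping the larger root produces, after clearing denominators and rewriting the radical coming from the discriminant, exactly the expression displayed in the theorem; substituting $s^\ast$ back into $R$ then gives the stated ratio $\approx 1.1244$.

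All of the geometric input is already contained in Lemmas~\ref{lem:hidingsum}--\ref{lem:inradius}, so the real work is purely algebraic: evaluating $S(T)$ and $M(T)=3(\pi-\arccos\tfrac13)$ in closed form, expanding the cubic and its derivative while holding $\pi$ and $\alpha$ symbolic, applying the quadratic formula, and reorganizing the resulting nested radical into the form shown. That bookkeeping — together with the modest verifications that the chosen root lies in $(0,1)$, is a maximum rather than a minimum, and dominates the endpoint values — is the main obstacle; every individual step is routine, but the simplification is long and error-prone and is most safely carried out with a computer algebra system.
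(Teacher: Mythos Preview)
Your approach is essentially the same as the paper's: invoke Lemma~\ref{lem:inradius} to fix $r=\tfrac{\sqrt6-\sqrt2}{4}$, expand $V(sT+(1-s)B_r)$ via the Steiner/mixed-volume formula with $V(T)=\tfrac{\sqrt2}{12}$, $S(T)=\sqrt3$, $M(T)=3(\pi-\alpha)$, and then optimize the resulting cubic in the interpolation parameter. If anything you are slightly more thorough than the paper, which does not explicitly justify restricting to the one-parameter family or check which root of the quadratic lies in $(0,1)$; but the substance of the argument is identical.
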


\begin{proof}
Let $\triangle$ and $\ocircle$ denote the original tetrahedron and the ball with radius $\frac{\sqrt{6}-\sqrt{2}}{4}$, respectively.  Then, the Minkowski interpolation 
$$\upmu \triangle + (1-\upmu)\ocircle,$$\\
where $0 \leq \upmu \leq 1$, can hide behind the tetrahedron.  Its volume can be expressed as 
$$\frac{\sqrt{2}}{12}s^3+3(\pi-\alpha)sr^2+s\sqrt{3}s^2r+\frac{4}{3}\pi r^3,$$
where 
$$r=\frac{\sqrt{6}-\sqrt{2}}{4}(1-\upmu)$$\\
$$s=\upmu$$\\
$$\alpha=\cos^{-1}\left(\frac{1}{3}\right).$$\\
Expanding this expression yields a cubic function with respect to $\upmu$, which is maximized at

$$\upmu=\frac{3\sqrt{2}-\sqrt{6}+c_{1}\pi+(12-6\sqrt{3})\alpha}{8\sqrt{2}-3\sqrt{6}+c_{2}\pi+(18-9\sqrt{3})\alpha}+\sqrt{\frac{c_{3}+c_{4}\pi+c_{5}\pi^2+c_{6}\alpha+c_{7}\pi\alpha+c_{8}\alpha^2}{8\sqrt{2}-3\sqrt{6}+c_{9}\pi+18\alpha-9\sqrt{3}\alpha^2}}$$

$$\approx 0.68424688,$$  
where

\begin{align*}c_{1}&=6\sqrt{3}+3\sqrt{6}-5\sqrt{2}-12\\
c_{2}&=9\sqrt{3}+3\sqrt{6}-5\sqrt{2}-18\\
c_{3}&=24-12\sqrt{3}+18\sqrt{6}\\
c_{4}&=38-33\sqrt{2}-22\sqrt{3}\\
c_{5}&=63-36\sqrt{3}\\
c_{6}&=33\sqrt{2}-18\sqrt{6}\\
c_{7}&=72\sqrt{3}-126\\
c_{8}&=63-36\sqrt{3}\\
c_{9}&=9\sqrt{3}+3\sqrt{6}-18-5\sqrt{2},\\
\end{align*}
generating the maximum volume approximately 0.13250689 and the volume ratio approximately 1.124358246.

\end{proof}

\section{Tetrahedron and Inverted Tetrahedron}\label{sec:tetratetra}

Another example in which one body can hide behind another body of smaller volume is generated by the Minkowski sum of a tetrahedron and an inverted tetrahedron.  See Figure~\ref{fig:uptetra}, Figure~\ref{fig:downtetra}, and Figure~\ref{fig:tetratetra} for a tetrahedron, an inverted tetrahedron, and the Minkowski sum obtained by scaling the tetrahedrons by some constants $s$ and $1-s$. 

\begin{center}
\begin{figure}[ht]
\centering
\includegraphics[scale=0.5]{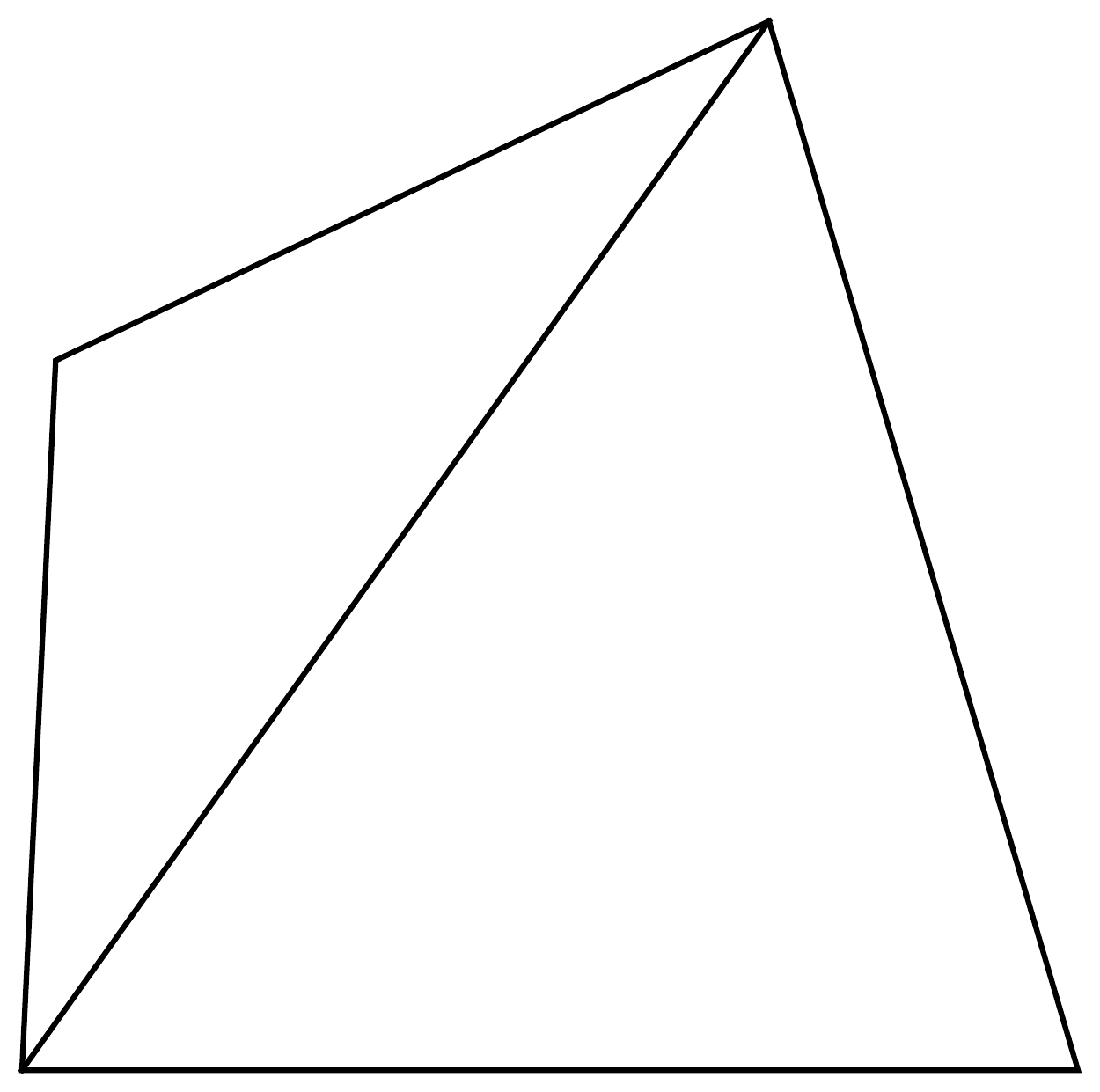}
\caption{A tetrahedron rotated.}
\label{fig:uptetra}
\end{figure}
\end{center}

\begin{center}
\begin{figure}[http]
\centering
\includegraphics[scale=0.5]{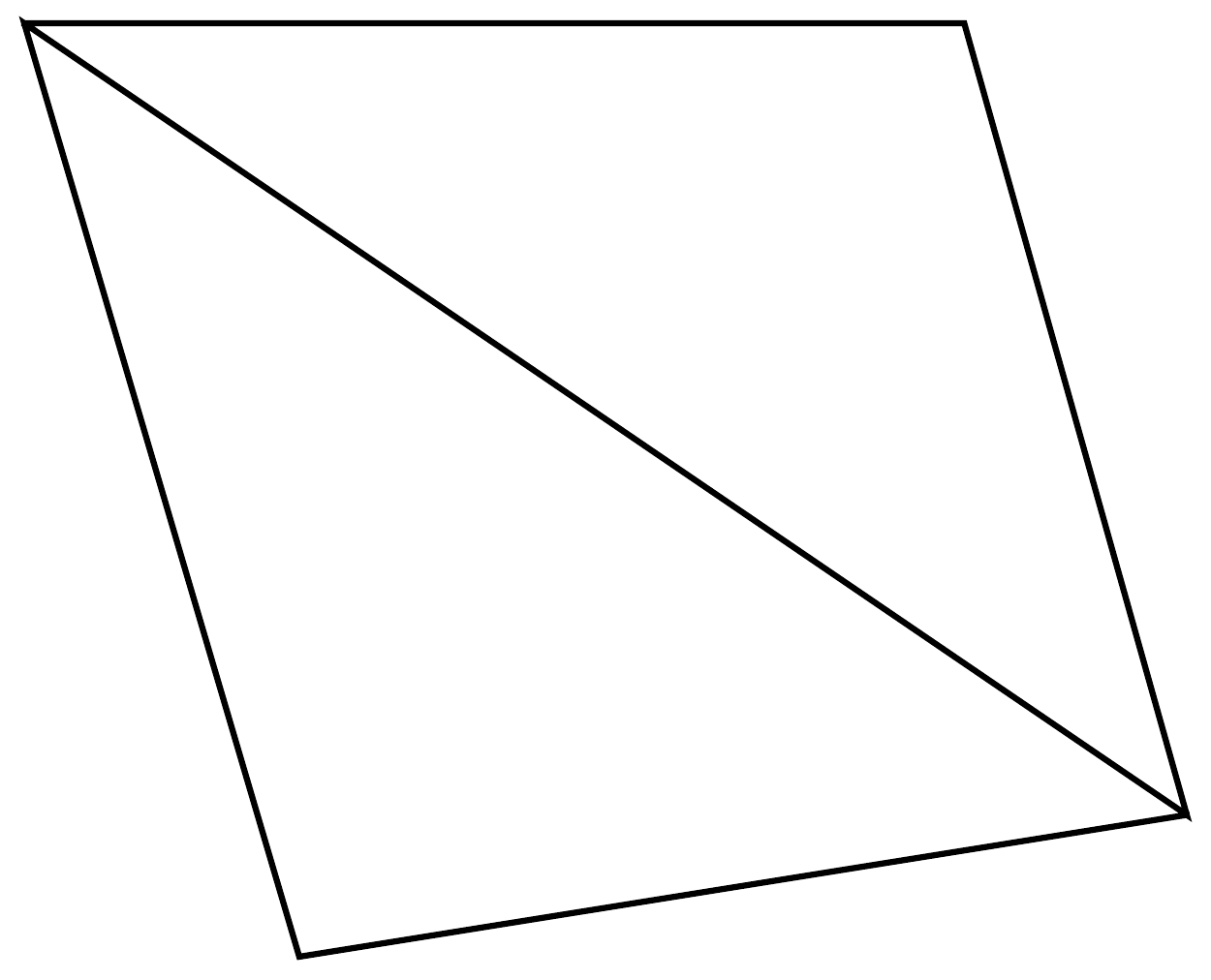}
\caption{A scaled and inverted tetrahedron.}
\label{fig:downtetra}
\end{figure}
\end{center}

\begin{center}
\begin{figure}[http]
\centering
\includegraphics[scale=0.5]{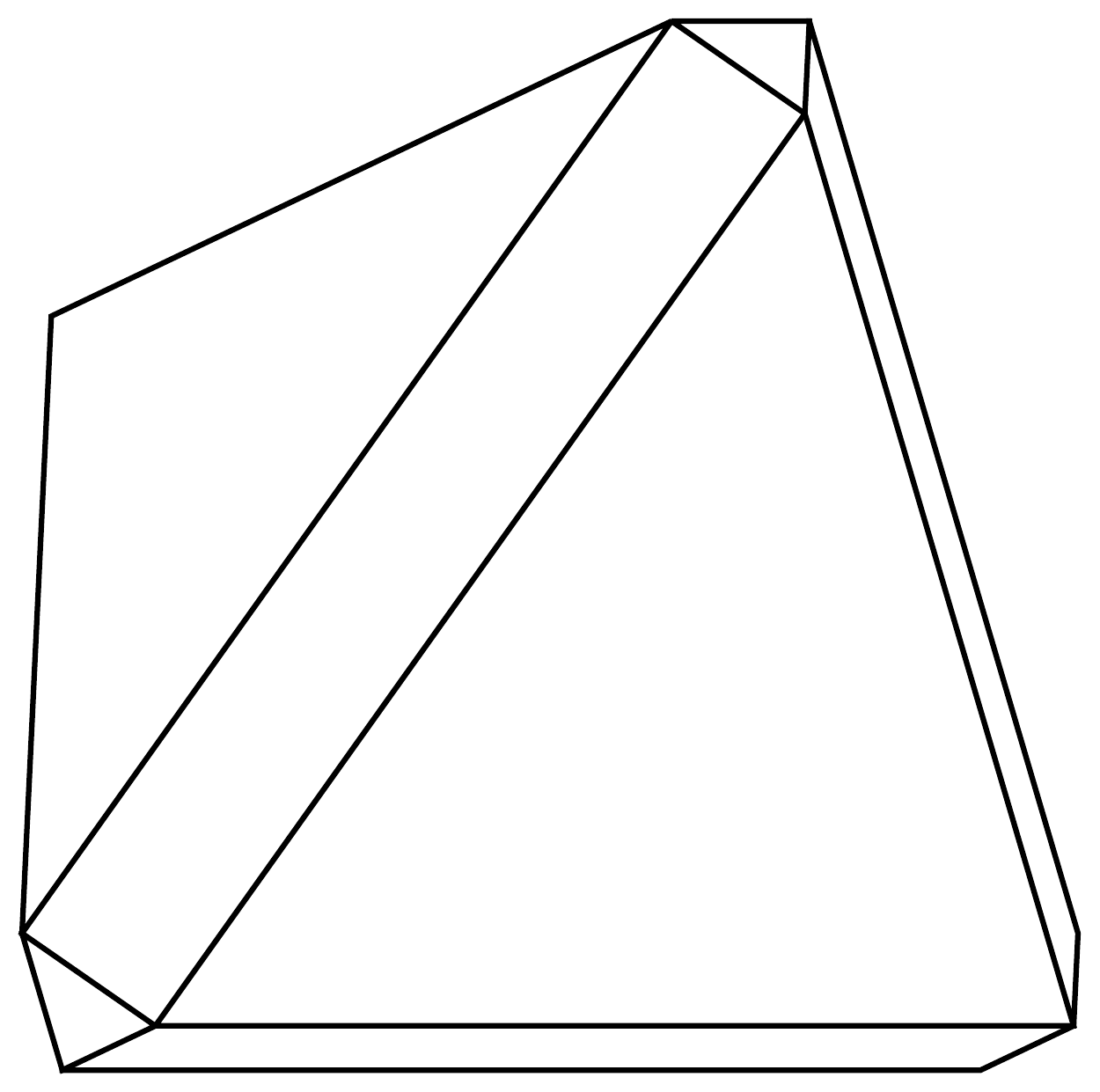}
\caption{Minkowski interpolation of tetrahedron and inverted tetrahedron.}
\label{fig:tetratetra}
\end{figure}
\end{center}

\begin{lemma}\label{lem:side lenth}
The largest inverted tetrahedron whose orthogonal projections can always be contained in the corresponding shadows of a unit tetrahedron has side length $\frac{1}{2}$. 
\end{lemma}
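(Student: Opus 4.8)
The plan is to reduce the three‑dimensional covering condition to a two‑dimensional homothety problem, one projection plane at a time. Write $\triangle$ for the unit tetrahedron and (up to translation) $-s\triangle$ for the inverted tetrahedron of side length $s$. For every unit vector $u$, the projection $(-s\triangle)_u$ is a translate of $-s(\triangle_u)$, since orthogonal projection commutes with the linear map $x\mapsto -sx$. Hence, by the definition of hiding behind, the inverted tetrahedron of side length $s$ hides behind $\triangle$ if and only if for every $u$ the negatively homothetic copy $-s\,\triangle_u$ can be translated inside the shadow $\triangle_u$. So, setting for a planar convex body $K$
$$\lambda(K)=\sup\{\lambda\ge 0:\ -\lambda K+t\subseteq K\text{ for some translation }t\},$$
the quantity we must compute is $\inf_u \lambda(\triangle_u)$, and the claim is that this infimum equals $\tfrac12$ and is attained.

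For the upper bound I would exhibit a single direction whose shadow is a triangle: projecting along a line through one vertex that meets the interior of the opposite face collapses that vertex into the (non‑degenerate) projection of the opposite face, so $\triangle_u$ is a genuine triangle for such $u$. It then suffices to show $\lambda(T)=\tfrac12$ for every triangle $T$, which is elementary in barycentric coordinates with respect to $T=\mathrm{conv}(A,B,C)$: a homothety of ratio $-\lambda$ centered at $p=\alpha A+\beta B+\gamma C$ carries $T$ into $T$ exactly when $\alpha,\beta,\gamma\ge \lambda/(1+\lambda)$, and summing the three inequalities gives $1\ge 3\lambda/(1+\lambda)$, i.e. $\lambda\le\tfrac12$, with equality only when $p$ is the centroid (the medial triangle). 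Thus $\inf_u\lambda(\triangle_u)\le\lambda(T)=\tfrac12$.

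For the lower bound I would invoke the classical estimate for the centroid of a convex body: if $g$ is the centroid of a convex body $K\subseteq\mathds{R}^{n}$, then $-\tfrac1n(K-g)\subseteq K-g$. Applied in the plane ($n=2$) to each shadow $\triangle_u$, this gives $-\tfrac12\triangle_u\subseteq\triangle_u$ after translating so the centroids coincide, hence $\lambda(\triangle_u)\ge\tfrac12$ for every $u$. Combining the two bounds yields $\inf_u\lambda(\triangle_u)=\tfrac12$, attained precisely on the triangular‑shadow directions, so by Lemma~\ref{lem:scaling}-style monotonicity considerations the largest admissible inverted tetrahedron has side length $\tfrac12$.

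The main obstacle — really the only place where any care is needed — is the lower bound: one must know, and perhaps quote or re‑derive, that in the plane the Minkowski measure of asymmetry is at most $2$, equivalently that every planar convex body contains a translate of the point‑reflection of its half‑scaled copy. The remaining ingredients (the commutation of projection with scaling, the existence of a triangular shadow of $\triangle$, and the barycentric computation for a triangle) are routine. I would also remark in passing that for the directions where $\triangle_u$ is a quadrilateral one has $\lambda(\triangle_u)>\tfrac12$ strictly, so those directions are never the binding constraint; only the triangular shadows force the bound $s=\tfrac12$.
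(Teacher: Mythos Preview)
Your proof is correct and follows essentially the same two-step approach as the paper: the lower bound via the planar fact that every convex $K$ contains a translate of $-\tfrac12 K$ (the paper cites Lutwak's containment theorem, you invoke the equivalent Minkowski--Radon centroid inequality), and the upper bound via a triangular shadow of the tetrahedron. Your version is more self-contained---the barycentric computation for $\lambda(T)=\tfrac12$ and the explicit centroid inequality replace the paper's citation and its ``it is clear''---but the passing reference to Lemma~\ref{lem:scaling} at the end is unnecessary, since the conclusion follows directly from $\inf_u\lambda(\triangle_u)=\tfrac12$.
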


\begin{proof}
By Lutwak's Containment Theorem in \cite{Lutwak1} and in \cite{Klain2}, in two dimensions, any shape $K$ contains a translate of $-\frac{1}{2}K$.  Since the shadows of $\triangle$ and $-\triangle$ are two-dimensional, it follows that each projection of $\triangle$ contains a translate of the corresponding projection of $-\frac{1}{2}\triangle$, so $-\frac{1}{2}\triangle$ can hide behind $\triangle$.

To prove that this bound is sharp, note the projection perpendicular to the bases of the tetrahedra, which generates equilateral triangle shadows.  It is clear that $-\frac{1}{2}\triangle$ can hide behind $\triangle$ but $-x\triangle$ cannot hide behind $\triangle$ for any $x > \frac{1}{2}$. 
\end{proof}

\begin{theorem}

The maximum volume ratio for the Minkowski interpolation of a tetrahedron and an inverted tetrahedron is obtained by scaling the inverted tetrahedron by $\frac{1+2\sqrt{14}}{11}\approx0.77121003$, and the corresponding volume ratio is approximately 1.1633587.

\end{theorem}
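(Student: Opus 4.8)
The plan is to mimic the tetrahedron--ball argument and reduce the problem to a one--variable optimization. First, by Lemma~\ref{lem:side lenth} the largest inverted tetrahedron that hides behind the unit tetrahedron $\triangle$ is $-\tfrac12\triangle$, and by Lemma~\ref{lem:scaling} this is the scaling that maximizes the volume of any interpolation; so by Lemma~\ref{lem:hidingsum} it suffices to study the bodies $B_\upmu := \upmu\triangle + (1-\upmu)\bigl(-\tfrac12\triangle\bigr)$ for $0\le\upmu\le1$, each of which hides behind $\triangle$, and to maximize $V(B_\upmu)/V(\triangle)$ over $\upmu\in[0,1]$.

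Second, I would expand $V(B_\upmu)$ by mixed volumes,
$$V(B_\upmu)=\upmu^{3}V(\triangle)+3\upmu^{2}(1-\upmu)\,V\!\bigl(\triangle,\triangle,-\tfrac12\triangle\bigr)+3\upmu(1-\upmu)^{2}\,V\!\bigl(\triangle,-\tfrac12\triangle,-\tfrac12\triangle\bigr)+(1-\upmu)^{3}V\!\bigl(-\tfrac12\triangle\bigr).$$
By homogeneity $V(-\tfrac12\triangle)=\tfrac18 V(\triangle)$, and the two cross terms are $\tfrac12$ and $\tfrac14$ times $m:=V(\triangle,\triangle,-\triangle)$, where I use that $V(\triangle,\triangle,-\triangle)=V(\triangle,-\triangle,-\triangle)$ because mixed volume is unchanged when $-I$ is applied to all arguments ($|\det(-I)|=1$). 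The crux is the identity $m=3V(\triangle)$. I would derive it from the equality case of the Rogers--Shephard inequality: since $\triangle$ is a simplex, $V(\triangle+(-\triangle))=\binom{6}{3}V(\triangle)=20\,V(\triangle)$ (the difference body $\triangle+(-\triangle)$ is in fact a cuboctahedron), and expanding the left-hand side as $V(\triangle)+3V(\triangle,\triangle,-\triangle)+3V(\triangle,-\triangle,-\triangle)+V(-\triangle)=2V(\triangle)+6m$ yields $m=3V(\triangle)$. Alternatively one can compute $m=\tfrac13\sum_{i=1}^{4}\operatorname{area}(F_i)\,h_{-\triangle}(n_i)$ from the mixed surface--area measure of $\triangle$: with $\triangle$ centered at its centroid each summand equals $\operatorname{area}(F_i)\cdot\tfrac34 h$, so the sum is $\operatorname{area}(F)\cdot h=3V(\triangle)$.

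Third, substituting $m=3V(\triangle)$ gives
$$\frac{V(B_\upmu)}{V(\triangle)}=\upmu^{3}+\tfrac92\upmu^{2}(1-\upmu)+\tfrac94\upmu(1-\upmu)^{2}+\tfrac18(1-\upmu)^{3}=-\tfrac{11}{8}\upmu^{3}+\tfrac38\upmu^{2}+\tfrac{15}{8}\upmu+\tfrac18=:f(\upmu),$$
a cubic with $f(1)=1$ and $f(0)=\tfrac18$. Then $f'(\upmu)=-\tfrac38\bigl(11\upmu^{2}-2\upmu-5\bigr)$, whose unique root in $(0,1)$ is $\upmu^{\ast}=\frac{1+2\sqrt{14}}{11}\approx 0.77121$; since $f''(\upmu^{\ast})<0$ this is the maximizer (the other root $\frac{1-2\sqrt{14}}{11}$ is negative and irrelevant). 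Evaluating $f(\upmu^{\ast})$ gives the claimed ratio $\approx 1.1633587$, and in particular $f(\upmu^{\ast})>1=f(1)$, so the interpolation genuinely has larger volume than the tetrahedron it hides behind.

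The main obstacle is the mixed--volume identity $V(\triangle,\triangle,-\triangle)=3V(\triangle)$; once that is in hand, everything reduces to routine single--variable calculus. A secondary point to be careful about is getting the binomial coefficients in the three--dimensional mixed--volume expansion right, so that the coefficients $\tfrac92,\tfrac94,\tfrac18$ of the cubic $f$ are correct.
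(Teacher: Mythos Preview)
Your proof is correct and follows the same overall architecture as the paper's: reduce to the one-parameter family $\upmu\triangle+(1-\upmu)(-\tfrac12\triangle)$, expand by mixed volumes, use the identity $V(\triangle,\triangle,-\triangle)=V(\triangle,-\triangle,-\triangle)=3V(\triangle)$ to obtain the cubic $\upmu^{3}+\tfrac92\upmu^{2}(1-\upmu)+\tfrac94\upmu(1-\upmu)^{2}+\tfrac18(1-\upmu)^{3}$, and optimize. The one substantive difference is how you justify $V(\triangle,\triangle,-\triangle)=3V(\triangle)$. The paper argues geometrically: $V(\triangle,\triangle,-\triangle)=cV(\triangle)$ where $c\triangle$ circumscribes $-\triangle$, and $c=3$ because the tetrahedron on the face centroids is $-\tfrac13\triangle$. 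You instead invoke the equality case of Rogers--Shephard, $V(\triangle-\triangle)=\binom{6}{3}V(\triangle)=20V(\triangle)$, and expand the left side as $2V(\triangle)+6m$; this is a clean, self-contained derivation that avoids appealing to the circumscription formula (which the paper states without proof). Your alternative via the surface-area measure of $\triangle$ is in fact exactly what underlies the paper's circumscription claim, so you have also supplied the missing justification for their route. Both approaches land on the same cubic and the same maximizer $\upmu^{\ast}=\frac{1+2\sqrt{14}}{11}$.
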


\begin{proof}

For the appropriate parameters $\alpha$ and $\beta=\frac{1-\alpha}{2}$, the volume of the Minkowski interpolation of the tetrahedron and the inverted tetrahedron can be expressed, by means of mixed volumes, as
\begin{align*}
&V[\alpha\triangle+\beta(-\triangle)]\\
&=\alpha^3V(\triangle)+3\alpha^2\beta V(\triangle, \triangle, -\triangle)+3\alpha\beta^2V(\triangle, -\triangle, -\triangle)+\beta^3V(-\triangle).
\end{align*}
We have that $V(\triangle, \triangle, -\triangle)=V(\triangle, -\triangle, -\triangle)=cV(\triangle)$, where $c\triangle$ circumscribes $-\triangle$.  It is clear that $c=3$, as a tetrahedron circumscribes the inverted tetrahedron formed by connecting the centroids of its faces.  Therefore, $V(\triangle, \triangle, -\triangle)=V(\triangle, -\triangle, -\triangle)=3V(\triangle)$, and the volume expression expands to
$$[\alpha^3+\frac{9}{2}\alpha^2(1-\alpha)+\frac{9}{4}\alpha(1-\alpha)^2+\frac{1}{8}(1-\alpha)^3]V(\triangle).$$
This function is maximized at $\alpha=\frac{1+2\sqrt{14}}{11}\approx0.77121003$, which generates the maximum volume approximately 0.137103138 and the volume ratio approximately 1.1633587.  

\end{proof}
Note that this is better than the ratio for the previous example.  This case and the two-dimensional case, which is known, suggest the following conjecture.

\begin{conjecture}
In any dimension $n$, the best volume ratio is generated by the Minkowski interpolation of a simplex and an inverted simplex. 
\end{conjecture}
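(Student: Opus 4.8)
The plan is to cast the conjecture as an extremal problem and attack it by a sequence of reductions. Write $R(A,B)=\mathrm{vol}(A)/\mathrm{vol}(B)$ and seek to maximize $R$ over all pairs of convex bodies $A,B\subset\mathds{R}^n$ with $A$ hiding behind $B$; the conjecture asserts that the supremum is attained, and attained by the simplex/inverted-simplex family of Section~\ref{sec:tetratetra}. First I would settle existence of a maximizer. Since hiding behind is closed under Hausdorff limits and $R$ is scale-invariant, I can normalize $\mathrm{vol}(B)=1$; the upper bound $(\tfrac{n}{n-1})^n$ of \cite{Klain4} keeps $\mathrm{vol}(A)$ bounded, and projecting a shadow of $A$ one further dimension shows every width of $A$ is at most the corresponding width of $B$, i.e. $A-A\subseteq B-B$ up to translation, which controls the size of $A$. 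A Blaschke selection argument should then yield an extremal pair $(A^\ast,B^\ast)$. The subtle point already here is that, unlike most volume-ratio problems, hiding behind is invariant only under rigid motions and scaling and \emph{not} under general affine maps, so the usual device of placing $B$ in John position to prevent degeneration is unavailable; ruling out escape to a degenerate $B^\ast$ is a genuine step requiring care.

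Second, with $B$ fixed I would pin down the optimal hiding body. By Lemma~\ref{lem:hidingsum} the family $\mathcal H(B)$ of bodies hiding behind $B$ is closed under Minkowski interpolation, and by Lemma~\ref{lem:side lenth} (Lutwak's containment theorem, \cite{Lutwak1}) the largest inverted copy of $B$ in $\mathcal H(B)$ is $-\tfrac{1}{n-1}B$. By Brunn--Minkowski the functional $\mathrm{vol}^{1/n}$ is concave along the interpolation joining $B$ to $-\tfrac{1}{n-1}B$, so the volume maximum along that segment is interior and located by elementary calculus, exactly as in the $n=3$ computation of the preceding theorem. The content to be supplied is that the maximizer over \emph{all} of $\mathcal H(B)$ lies on this one-parameter segment; for $B$ a regular simplex I would obtain this from symmetry, since $\alpha\triangle+\beta(-\triangle)$ is invariant under the full symmetry group of $\triangle$ and one expects a symmetric maximizer. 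Granting this, the volume is given by the mixed-volume expansion
$$\mathrm{vol}\bigl(\alpha\triangle+\beta(-\triangle)\bigr)=V(\triangle)\sum_{k=0}^{n}\binom{n}{k}^{2}\alpha^{\,n-k}\beta^{k},\qquad \beta=\frac{1-\alpha}{n-1},$$
using $V(\triangle[n-k],(-\triangle)[k])=\binom{n}{k}V(\triangle)$, which holds for $n=3$ by the previous theorem and is consistent with the Rogers--Shephard value $\mathrm{vol}(\triangle-\triangle)=\binom{2n}{n}V(\triangle)$ via Vandermonde's identity. Maximizing in $\alpha$ yields the candidate optimal ratio $R_n=\max_{0\le\alpha\le1}\sum_{k=0}^{n}\binom{n}{k}^{2}\alpha^{\,n-k}\bigl(\tfrac{1-\alpha}{n-1}\bigr)^{k}$, which specializes to $\tfrac{3}{2}$ at $n=2$ and to the $\approx1.16$ of Section~\ref{sec:tetratetra} at $n=3$.

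Third, and this is the crux, I would have to prove that the regular simplex is the optimal choice of $B$: that $\max_{A\in\mathcal H(B)}R(A,B)$ is largest when $B$ is a simplex. The heuristic support is strong — simplices are precisely the equality case of Lutwak's containment theorem in the shadow dimension $n-1$, they are the extremizers of the Rogers--Shephard difference-body inequality, and in the extremal projection directions their shadows are themselves simplices, the hardest $(n-1)$-dimensional bodies to cover by an inverted copy. The rigorous statement needed is a new extremal inequality asserting that deforming $B$ toward a simplex does not decrease the attainable ratio, which I would attempt by a symmetrization or shadow-system argument moving the vertices of $B^\ast$. I expect this to be the main obstacle: there is no existing inequality that directly delivers it, and the available upper bound $(\tfrac{n}{n-1})^n$ of \cite{Klain4} lies far above the conjectured value (for instance $3.375$ against $\approx1.16$ at $n=3$), so closing the gap demands a genuinely new worst-case inequality rather than a mere sharpening of that estimate. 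A promising auxiliary route is induction on $n$, feeding the $(n-1)$-dimensional extremality of simplices in the containment theorem into the $n$-dimensional volume comparison, but transferring extremality of shadows to extremality of the full $n$-dimensional volume ratio is exactly the difficulty that makes the statement a conjecture rather than a theorem.
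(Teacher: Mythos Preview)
The statement you are addressing is presented in the paper as a \emph{conjecture}, not a theorem; the paper offers no proof whatsoever, only the evidence of the $n=2$ and $n=3$ computations and the remark that the known upper bound $(\tfrac{n}{n-1})^n$ lies far above the computed values. There is therefore no proof in the paper to compare your attempt against.

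Your proposal is, as you yourself make explicit, a research programme rather than a proof. The parts you actually carry out are sound: the mixed-volume identity $V(\triangle[n-k],(-\triangle)[k])=\binom{n}{k}V(\triangle)$ is correct for simplices and is consistent via Vandermonde with the Rogers--Shephard equality $\mathrm{vol}(\triangle-\triangle)=\binom{2n}{n}V(\triangle)$; the resulting one-parameter formula specializes to the paper's values at $n=2,3$; and the compactness outline via Blaschke selection is reasonable modulo the degeneration issue you flag. But the two items you label ``content to be supplied'' and ``the crux'' are precisely the conjecture itself: (i) that for fixed $B$ the volume maximizer over all of $\mathcal H(B)$ lies on the single segment from $B$ to $-\tfrac{1}{n-1}B$, and (ii) that the simplex is the extremal $B$. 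Neither follows from anything in the paper or in \cite{Klain4,Lutwak1}, and you correctly observe that no known inequality delivers (ii). Your symmetry heuristic for (i) when $B=\triangle$ is also not a proof: invariance of $\mathcal H(\triangle)$ under the symmetry group of $\triangle$ does not force the maximizer to be symmetric without a uniqueness or strict-concavity input that you do not have. In short, the proposal is an honest and well-organized map of what a proof would need to accomplish, but it leaves the decisive extremal inequality entirely open, which is why the paper states the result as a conjecture.
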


\section{Future Research}
\label{sec:futureresearch}

The question of whether our examples indeed generate the best volume ratio is still unsolved.  The current ratio bounds are a lot bigger than our calculations, although patterns certainly suggest that we have calculated the highest ratio.  In addition, although we only investigated examples involving Minkowski sums in our paper, this does not preclude the existence of any examples that do not require Minkowski sums.  

\section{Acknowledgments}
I am grateful to Tanya Khovanova for mentoring me in this project, for suggesting many impressive ideas, and for teaching me many of the technical skills required for the calculations and graphics for this paper.  I am also grateful to Daniel Klain for teaching me so much of the background necessary for beginning my research, for encouraging me with his enthusiasm, and for introducing me to this wonderful topic.  I am also grateful to the PRIMES program for this unique opportunity.

\end{document}